\documentclass[a4paper,12pt]{amsart}
\usepackage{graphicx}
\usepackage{amssymb}
\usepackage{amsmath}
\usepackage{latexsym}
\usepackage{amsthm}
\usepackage{autograph,epic,latexsym,bezier,amsbsy,color,enumerate,amsfonts,amsmath,amscd,amssymb}
\usepackage[latin1]{inputenc}

\unitlength=0,3mm

\textwidth = 16.00cm \textheight = 22.00cm \oddsidemargin= 0.12in
\evensidemargin = 0.12in \setlength{\parindent}{8pt}
\setlength{\parskip}{5pt plus 2pt minus 1pt}
\setloopdiam{17}\setprofcurve{10}

\newtheorem{defi}{Definition}[section]
\newtheorem{thm}[defi]{Theorem}

\newtheorem{lemma}[defi]{Lemma}
\newtheorem{example}[defi]{Example}
\newtheorem{remark}[defi]{Remark}

\begin{document}
\author{Alfredo Donno}
\address{Università degli Studi Niccolò Cusano - Telematica Roma - Via Don Carlo Gnocchi, 3 00166 Roma, Italia \qquad Tel.: +39 06 45678350
\qquad Fax: +39 06 45678379} \email{alfredo.donno@gmail.com,
alfredo.donno@unicusano.it}
\title{Generalized wreath products of graphs and groups}

\keywords{Wreath and generalized wreath product, Cayley graph,
Poset block structure, Ancestral set.}

\begin{abstract}
Inspired by the definition of generalized wreath product of
permutation groups, we define the generalized wreath product of
graphs, containing the classical Cartesian and wreath product of
graphs as particular cases. We prove that the generalized wreath
product of Cayley graphs of finite groups is the Cayley graph of
the generalized wreath product of the corresponding groups.
\end{abstract}

\maketitle

\begin{center}
{\footnotesize{\bf Mathematics Subject Classification (2010)}:
05C76, 20B25, 20E22.}
\end{center}

\section{Introduction}

The idea of constructing new graphs starting from smaller
component graphs is very natural. Products of graphs were widely
studied in the literature for their theoretical interest in
Combinatorics, Probability, Harmonic Analysis, but also for their
practical applications. Standard products include the Cartesian
product, direct product, strong product, lexicographic product
\cite{imrich, sabidussi, cartesian} (see also the beautiful
handbook \cite{imrichbook}). In \cite{annals}, the zig-zag product
was introduced in order to produce constant-degree expanders of
arbitrary size (see the surveys \cite{expander, lubotullio} for
definition, properties and further references on expander graphs).
The zig-zag product and the simpler replacement product play also
an important role in Geometric Group Theory, since it turns out
that, when applied to Cayley graphs of two finite groups, they
provide the Cayley graph of the semidirect product of these groups
\cite{groups, alfredo1, expander, communications}, with a suitable
choice of the corresponding generating sets. An analogous result
holds for the classical wreath product of graphs (Theorem
\ref{proofwreath}, Section \ref{section2}).\\
\indent Inspired by the paper \cite{bayleygeneralized}, where the
definition of generalized wreath product of permutation groups is
given as a generalization of the classical direct and wreath
product of permutation groups, we define the {\it generalized
wreath product of graphs} (note that in \cite{erschler} a
different notion of generalized wreath product of graphs is
presented). It is remarkable that, with a particular choice of the
generating sets, our construction of the generalized wreath
product applied to Cayley graphs of finite groups gives the Cayley
graph of the generalized wreath product of the groups (Theorem
\ref{theoremlast}, Section \ref{section3}), providing a strong
generalization of Theorem \ref{proofwreath}.

\section{Preliminaries}\label{section2}

Let us start by recalling the definition of Cayley graph of a
finitely generated group with respect to some symmetric generating
set. We denote by $1_G$ the identity element of a group $G$.
\begin{defi}
Let $G$ be a group generated by a finite set $S$, and suppose that
$S$ is symmetric, i.e., if $s\in S$, then also $s^{-1}\in S$, and
that $1_G \not\in S$. The {\it Cayley graph} $Cay(G,S)$ of $G$
with respect to $S$ is the graph whose vertex set is $G$, and
where two vertices $g$ and $g'$ are adjacent (we will use the
notation $g\sim g'$) if there exists a generator $s\in S$ such
that $gs=g'$. The graph $Cay(G,S)$ is clearly a connected regular
graph of degree $|S|$.
\end{defi}
Note that we assume $1_G \not\in S$ in order to avoid loops in the
graph $Cay(G,S)$.

Let us recall the definition of Cartesian product (see, for
instance, \cite{cartesian}, or \cite{imrichbook, imrich}, where a
more general construction containing it as a particular case is
introduced) and wreath product of graphs \cite{erschler}.

\begin{defi}\label{definitioncartesianproduct}
Let $\mathcal{G}_1=(V_1, E_1)$ and $\mathcal{G}_2=(V_2,E_2)$ be
two finite graphs. The \textit{Cartesian product} $\mathcal{G}_1
\square \mathcal{G}_2$ is the graph with vertex set $V_1\times
V_2$, where two vertices $(v_1,v_2)$ and $(w_1,w_2)$ are adjacent
if:
\begin{enumerate}
\item either $v_1= w_1$ and $v_2\sim w_2$ in $\mathcal{G}_2$;
\item or $v_2=w_2$ and $v_1\sim w_1$ in $\mathcal{G}_1$.
\end{enumerate}
\end{defi}
It follows from the definition that if $\mathcal{G}_1$ is a
$d_1$-regular graph on $n_1$ vertices and $\mathcal{G}_2$ is a
$d_2$-regular graph on $n_2$ vertices, then the graph
$\mathcal{G}_1\square \mathcal{G}_2$ is a $(d_1+d_2)$-regular
graph on $n_1n_2$ vertices. Notice also that the graphs
$\mathcal{G}_1\square \mathcal{G}_2$ and $\mathcal{G}_2\square
\mathcal{G}_1$ are isomorphic.
\begin{defi}\label{defierschler}
Let $\mathcal{G}_1=(V_1, E_1)$ and $\mathcal{G}_2=(V_2,E_2)$ be
two finite graphs. The \textit{wreath product} $\mathcal{G}_1\wr
\mathcal{G}_2$ is the graph with vertex set $V_2^{V_1}\times V_1=
\{(f,v) | f:V_1\to V_2, \ v\in V_1\}$, where two vertices $(f,v)$
and $(f',v')$ are connected by an edge if:
\begin{enumerate}
\item ({\it edges of the first type}) either $v=v'=:\overline{v}$ and $f(w)=f'(w)$ for every $w\neq \overline{v}$,
and $f(\overline{v})\sim f'(\overline{v})$ in $\mathcal{G}_2$;
\item ({\it edges of the second type}) or $f(w)=f'(w)$, for every $w\in
V_1$, and $v\sim v'$ in $\mathcal{G}_1$.
\end{enumerate}
\end{defi}
It follows from the definition that, if $\mathcal{G}_1$ is a
regular graph on $n_1$ vertices with degree $d_1$ and
$\mathcal{G}_2$ is regular graph on $n_2$ vertices with degree
$d_2$, then the graph $\mathcal{G}_1\wr \mathcal{G}_2$ is a
$(d_1+d_2)$-regular graph on $n_1\cdot n_2^{n_1}$ vertices.

The wreath product of graphs represents a graph-analogue of the
classical wreath product of groups (Theorem \ref{proofwreath}). To
show that, we need to recall the basic definition of semidirect
product of groups. Let $A$ and $B$ be two finite groups, and
suppose that an action by automorphisms of $B$ on $A$ is defined,
i.e., there exists a group homomorphism $\phi: B \to Aut(A)$. For
every $a\in A$ and $b\in B$, we denote by $a^b$ the image of $a$
under the action of $\phi(b)$ and, similarly, we denote by $a^B =
\{a^b\ |\ b\in B\}$ the orbit of $a$ under the action of the group
$B$.
\begin{defi}
The \textit{semidirect product} $A\rtimes B$ is the group whose
underlying set is $A\times B = \{(a,b) \ |\ a\in A,b\in B\}$, and
whose group operation is defined by
$$
(a_1,b_1)(a_2,b_2) = (a_1a_2^{b_1},b_1b_2), \qquad \mbox{for all }
a_1,a_2\in A, b_1,b_2\in B.
$$
\end{defi}
It is easy to check that the identity of $A\rtimes B$ is given by
$(1_A,1_B)$, where $1_A$ and $1_B$ are the identity in $A$ and
$B$, respectively, and that $ (a,b)^{-1} =
((a^{-1})^{b^{-1}},b^{-1})$, for all $a\in A, b\in B$. Note that
the subgroup $A\times \{1_B\}$ of $A\rtimes B$ is isomorphic to
$A$, it is normal in $A\rtimes B$ and the action of $B$ on $A$ by
conjugation coincides with the original action of $B$ on $A$. In
formulas, we have $ (1_A,b)(a,1_B)(1_A,b)^{-1} = (a^b,1_B)$, for
all $a\in A, b\in B$.
\begin{defi}
Let $A$ and $B$ be two finite groups. The set $B^A = \{f:A\to B\}$
can be endowed with a group structure with respect to the
pointwise multiplication: $(f_1f_2)(a) = f_1(a) f_2(a)$. The {\it
wreath product} $A\wr B$ is the semidirect product $B^A\rtimes A$,
where $A$ acts on $B^A$ by shifts, i.e., if $f\in B^A$, one has
$$
f^a(x) = f(a^{-1}x), \quad \mbox{for all } a,x\in A.
$$
\end{defi}
We introduce some notation. If $f\in B^A$ and $A = \{a_1, a_2,
\ldots, a_{n_A}\}$, then we write $f=(f_1,f_2, \ldots, f_{n_A})$,
where we denote by $f_i$ the element $f(a_i)\in B$, for each
$i=1,\ldots, n_A$. In particular, an element of $B^A\times A$ will
be written as $((f_1,\ldots, f_{n_A}), a)$.
\begin{thm}\label{proofwreath}
Let $A$ and $B$ be two finite groups and let $S_A$ and $S_B$ be
symmetric generating sets for $A$ and $B$, respectively. Then
$$
Cay(A,S_A)\wr Cay(B,S_B) = Cay(A\wr B, S),
$$
where $S$ is the generating set of $A\wr B$ given by
$$
S = \{((s_b,1_B,\ldots,1_B),1_A), ((1_B,\ldots, 1_B),s_a)\ |\
s_a\in S_A, s_b\in S_B\}.
$$
\end{thm}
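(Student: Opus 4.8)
The plan is to prove the equality of the two graphs directly, by checking that they share the same vertex set and the same adjacency relation. First I would observe that both sides have vertex set $B^A\times A$: the wreath product $Cay(A,S_A)\wr Cay(B,S_B)$ has vertex set $B^A\times A$ by Definition \ref{defierschler} (with $V_1=A$, $V_2=B$), while the underlying set of $A\wr B=B^A\rtimes A$ is also $B^A\times A$. Using the notation $((f_1,\ldots,f_{n_A}),a)$ for its elements, I would fix the convention $a_1=1_A$ for the enumeration of $A$, since this is exactly what makes the position of the nontrivial entry of the first-type generators match the active coordinate of the first-type edges.

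Before comparing edges, I would record the explicit group law in $B^A\rtimes A$. From the definition of semidirect product together with the shift action $f^a(x)=f(a^{-1}x)$, one obtains $(f,a)(f',a')=(f\cdot (f')^a,\,aa')$, where $(f\cdot (f')^a)(x)=f(x)f'(a^{-1}x)$. I would also verify that $S$ is a legitimate symmetric generating set avoiding the identity: using $(f,a)^{-1}=((f^{-1})^{a^{-1}},a^{-1})$ and the symmetry of $S_A$ and $S_B$, the inverse of a first-type generator is again of the first type (here $s_b^{-1}\in S_B$), while the inverse of a second-type generator is again of the second type (here $s_a^{-1}\in S_A$, and the shift of the constant function $1_B$ is constant).

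The heart of the argument is then a pair of computations, one per generator type, showing that right-multiplication by $S$ reproduces precisely the two kinds of edges of Definition \ref{defierschler}. Multiplying $(f,v)$ by a first-type generator $((s_b,1_B,\ldots,1_B),1_A)$ yields, after evaluating the shift, the vertex $(f',v)$ with $f'(v)=f(v)s_b$ and $f'(w)=f(w)$ for $w\neq v$; since $f(v)s_b=f'(v)$ is exactly the condition $f(v)\sim f'(v)$ in $Cay(B,S_B)$, this is a first-type edge. Multiplying $(f,v)$ by a second-type generator $((1_B,\ldots,1_B),s_a)$ leaves $f$ unchanged and sends $v$ to $vs_a$; since $vs_a=v'$ means $v\sim v'$ in $Cay(A,S_A)$, this is a second-type edge. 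Running each computation in both directions — from a generator to an edge, and from a prescribed edge back to the generator realizing it (choosing $s_b$ with $f(v)s_b=f'(v)$, respectively $s_a$ with $vs_a=v'$) — establishes that the two adjacency relations coincide.

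The main obstacle I expect is bookkeeping in the shift action rather than any conceptual difficulty: one must check that $g^v$, for $g=(s_b,1_B,\ldots,1_B)$, is supported exactly at the coordinate $v$. This is where the convention $a_1=1_A$ enters, via $g^v(x)=s_b$ iff $v^{-1}x=a_1$, so that the perturbation produced by the group product is localized at the active vertex $\overline{v}=v$ and not at a shifted coordinate. Once this localization is confirmed, the matching of the two edge types — and hence the claimed equality $Cay(A,S_A)\wr Cay(B,S_B)=Cay(A\wr B,S)$ — follows immediately.
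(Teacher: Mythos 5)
Your treatment of the edge correspondence --- the heart of the theorem --- is correct and is in substance the same computation as the paper's: one multiplies a vertex by each of the two types of generators, uses the semidirect product law $(f,a)(f',a')=(f\cdot (f')^a,\,aa')$, and checks that the shift $g^v$ of $g=(s_b,1_B,\ldots,1_B)$ is supported exactly at the active vertex $v$, while the constant function is shift-invariant. Making the convention $a_1=1_A$ explicit is in fact an improvement in transparency: the paper uses it tacitly when it asserts that $(s_k,1_B,\ldots,1_B)^{a_k}$ has $s_k$ in the $k$-th place, which holds precisely because $a_k a_1=a_k$.

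There is, however, a genuine gap in your handling of the generating property. You announce that you will ``verify that $S$ is a legitimate symmetric generating set avoiding the identity,'' but the verification you then describe only establishes that $S$ is symmetric and excludes the identity; nowhere do you show that every element of $A\wr B$ is a product of elements of $S$. This cannot be skipped: the statement of the theorem calls $S$ a generating set, and the notation $Cay(A\wr B,S)$ presupposes it, since the paper's definition of Cayley graph applies only to generating sets (with a non-generating symmetric set the right-hand graph would be disconnected, its components being the cosets of $\langle S\rangle$). The paper devotes the first part of its proof to exactly this point, via an explicit decomposition: any $((f_1,\ldots,f_{n_A}),a)$ is written as a product of elements each supported in a single coordinate; functions supported in the first coordinate are products of first-type generators, the $A$-part is a product of second-type generators, and the support is moved from the first to the $i$-th coordinate through the identity
$$
((1_B,\ldots,1_B,f_i,1_B,\ldots,1_B),1_A)=((1_B,\ldots,1_B),a_ia_1^{-1})\,((f_i,1_B,\ldots,1_B),a_1a_i^{-1}).
$$
Alternatively, within your own framework generation could be deduced a posteriori: your edge correspondence identifies the graph on $B^A\times A$ determined by right multiplication by $S$ with $Cay(A,S_A)\wr Cay(B,S_B)$, and the latter is connected whenever both factors are (a lamplighter-type argument: walk in $Cay(A,S_A)$ to each position where the two configurations disagree, correct it by a path in $Cay(B,S_B)$, then walk to the target position); connectivity of that graph is exactly the statement that $S$ generates. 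Either route works, but one of them must actually be carried out --- as written, your proposal proves symmetry where generation is needed.
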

\begin{proof}
It is easy to check that $S$ is a symmetric generating set of
$A\wr B = B^A\rtimes A$. More precisely, any element $(f,a)=((f_1,
\ldots, f_{n_A}), a)\in B^A\rtimes A$, with $f_i\in B$ and $a\in
A$, can be decomposed as
$$
((f_1,1_B, \ldots, 1_B),1_A)((1_B, f_2,1_B,\ldots, 1_B),1_A)\cdots
((1_B, \ldots, 1_B,f_{n_A}),1_A)((1_B,\ldots,1_B),a).
$$
Now if $a = \prod_{k=1}^r s_k^{m_k}$, with $s_k\in S_A$ and
$m_k\in \mathbb{N}$, then one has
$$
((1_B,\ldots,1_B),a) = \prod_{k=1}^r ((1_B,\ldots,1_B),s_k)^{m_k}.
$$
Similarly, if $f_1 = \prod_{h=1}^{l}s_h^{m_h}$, with $s_h\in S_B$
and $m_h\in \mathbb{N}$, it holds:
$$
((f_1,1_B, \ldots, 1_B),1_A) = \prod_{h=1}^l ((s_h,1_B, \ldots,
1_B),1_A)^{m_h}.
$$
Finally, we have
$$
((1_B,\ldots, 1_B,f_i,1_B,\ldots,1_B),1_A) =
((1_B,\ldots,1_B),a_ia_1^{-1})((f_i,1_B,\ldots,1_B),a_1a_i^{-1})
$$
and so we conclude that $S$ generates $A\wr B$.\\
\indent What we have to prove now is that an edge in the graph
product $Cay(A,S_A)\wr Cay(B,S_B)$ corresponds exactly to the
multiplication by an element of $S$ in $A\wr B$.\\
\indent Consider an edge of the first type in $Cay(A,S_A)\wr
Cay(B,S_B)$: such an edge connects the vertices $((f_1,
\ldots,f_k, \ldots, f_{n_A}),a_k)$ and $((g_1, \ldots,g_k,\ldots,
g_{n_A}), a_k)$. By definition, it must be $f_i = g_i$ for every
$i\neq k$, whereas $f_k$ and $g_k$ are vertices adjacent in
$Cay(B,S_B)$. It follows that there exists $s_k\in S_B$ such that
$f_k s_k = g_k$. Then one gets $((g_1, \ldots,g_k,\ldots,
g_{n_A}), a_k)$ from $((f_1, \ldots,f_k, \ldots, f_{n_A}),a_k)$ by
multiplying by $((s_k,1_B,\ldots,1_B),1_A)$. In fact:
$$
((f_1,\! \ldots\!,\!f_k,\! \ldots\!,\!
f_{n_A}\!),\!a_k)\cdot((s_k,\!1_B,\!\ldots\!,\!1_B),\!1_A\!)\! =\!
((f_1,\! \ldots\!,\!f_k,\! \ldots\!,\!
f_{n_A})\cdot(s_k,\!1_B,\!\ldots\!,\!1_B)^{a_k}\!,\! a_k\cdot 1_A)
$$
$$
=((f_1,\! \ldots\!,\!f_k,\! \ldots\!,
f_{n_A})\cdot(\!1_B,\!\ldots\!,\!
1_B,\!\underbrace{s_k}_{k\tiny{\mbox{-th
place}}}\!,\!1_B,\!\ldots\!,\!1_B), a_k) \!=\! ((g_1, g_2,\!
\ldots\!,g_k,\!\ldots\!, g_{n_A}),a_k).
$$
This implies that edges of the first type correspond to
multiplication by elements of the form
$((s_b,1_B,\ldots,1_B),1_A)$, with $s_b\in S_B$.\\
\indent Consider now an edge of the second type in $Cay(A,S_A)\wr
Cay(B,S_B)$: such an edge connects the vertices $((f_1, \ldots,
f_{n_A}),a_k)$ and $((g_1, \ldots, g_{n_A}), a_h)$. By definition,
it must be $f_i = g_i$ for every $i=1,\ldots, n_A$, whereas $a_k$
and $a_h$ are vertices adjacent in $Cay(A,S_A)$. It follows that
there exists $s_a\in S_A$ such that $a_k s_a = a_h$. Then one gets
$((g_1, \ldots, g_{n_A}), a_h)$ from $((f_1, \ldots,
f_{n_A}),a_k)$ by multiplying by $((1_B,\ldots,1_B),s_a)$. In
fact:
$$
((f_1,\ldots, f_{n_A}),a_k)\cdot((1_B,\ldots,1_B),s_a) =
((f_1,\ldots, f_{n_A})\cdot(1_B,\ldots,1_B)^{a_k}, a_k\cdot s_a)=
$$
$$
((f_1, \ldots, f_{n_A})\cdot(1_B,\ldots, 1_B), a_h) =
((g_1,\ldots, g_{n_A}),a_h).
$$
This ensures that edges of the second type correspond to
multiplication by elements of the form $((1_B,\ldots,1_B),s_a)$,
with $s_a\in S_A$.
\end{proof}
\begin{example}\rm
Consider the graphs $\mathcal{G}_1$ and $\mathcal{G}_2$ in Fig.
\ref{duek2}.
\begin{center}
\begin{figure}[h!]
\begin{picture}(200,20)\unitlength=0,15mm
\letvertex A=(50,20)\letvertex B=(150,20)\letvertex C=(250,20)\letvertex D=(350,20)

\drawvertex(A){$\bullet$}\drawvertex(B){$\bullet$}\drawvertex(C){$\bullet$}\drawvertex(D){$\bullet$}

\drawundirectededge(A,B){}\drawundirectededge(C,D){}

\put(42,30){$u_1$}\put(142,30){$v_1$}\put(242,30){$u_2$}\put(342,30){$v_2$}
\put(95,-3){$\mathcal{G}_1$}\put(295,-3){$\mathcal{G}_2$}
\end{picture}\caption{}\label{duek2}
\end{figure}
\end{center}
Then the wreath product $\mathcal{G}_1\wr \mathcal{G}_2$ is the
octagonal graph in Fig. \ref{ottagono}.
\begin{center}
\begin{figure}[h]
\begin{picture}(200,180)\unitlength=0,11mm
\letvertex A=(250,420)\letvertex B=(390,360)\letvertex C=(450,220)\letvertex D=(390,80)\letvertex E=(250,20)\letvertex F=(110,80)\letvertex G=(50,220)
\letvertex H=(110,360)

\drawvertex(A){$\bullet$}\drawvertex(B){$\bullet$}\drawvertex(C){$\bullet$}\drawvertex(D){$\bullet$}
\drawvertex(E){$\bullet$}\drawvertex(F){$\bullet$}\drawvertex(G){$\bullet$}\drawvertex(H){$\bullet$}

\drawundirectededge(A,B){}\drawundirectededge(B,C){}\drawundirectededge(C,D){}\drawundirectededge(D,E){}
\drawundirectededge(E,F){}\drawundirectededge(F,G){}\drawundirectededge(G,H){}\drawundirectededge(H,A){}

\put(750,200){$\mathcal{G}_1\wr \mathcal{G}_2$}

\footnotesize
\put(170,440){$((u_2,u_2),u_1)$}\put(-78,350){$((v_2,u_2),u_1)$}

\put(-135,215){$((v_2,u_2),v_1)$}\put(-70,70){$((v_2,v_2),v_1)$}

\put(170,-15){$((v_2,v_2),u_1)$}\put(405,70){$((u_2,v_2),u_1)$}

\put(460,205){$((u_2,v_2),v_1)$}\put(405,355){$((u_2,u_2),v_1)$}
\end{picture}\caption{}\label{ottagono}
\end{figure}
\end{center}
If we regard the graphs $\mathcal{G}_1$ and $\mathcal{G}_2$ as the
Cayley graphs of two cyclic groups of two elements, then the
generators given by Theorem \ref{proofwreath} have order $2$, but
they do not commute: the wreath product
$\mathcal{G}_1\wr\mathcal{G}_2$ is the Cayley graph of the wreath
product of these groups, which is isomorphic to the dihedral group
of $8$ elements.

Now let $\mathcal{G}_3$ be a triangular graph. In Fig.
\ref{grande24} the wreath product $\mathcal{G}_3\wr \mathcal{G}_1$
is represented.
\begin{center}
\begin{figure}[h]
\begin{picture}(350,160)\unitlength=0,11mm

\letvertex AA=(10,200)\letvertex BB=(90,200)\letvertex CC=(50,271)
\drawvertex(AA){$\bullet$}\drawvertex(BB){$\bullet$}\drawvertex(CC){$\bullet$}

\drawundirectededge(AA,BB){}\drawundirectededge(BB,CC){}\drawundirectededge(CC,AA){}

\put(-50,230){$\mathcal{G}_3$}\put(750,230){$\mathcal{G}_3\wr
\mathcal{G}_1$}

\letvertex A=(310,340)\letvertex B=(310,60)\letvertex C=(360,10)\letvertex D=(640,10)\letvertex E=(690,60)\letvertex F=(690,340)\letvertex G=(640,390)
\letvertex H=(360,390)\letvertex I=(360,340)\letvertex L=(360,60)\letvertex M=(640,60)\letvertex N=(640,340)\letvertex O=(420,280)
\letvertex P=(420,230)\letvertex Q=(420,170)\letvertex R=(420,120)\letvertex S=(470,120)\letvertex T=(530,120)\letvertex U=(580,120)
\letvertex V=(580,170)\letvertex Z=(580,230)\letvertex X=(580,280)\letvertex Y=(530,280)\letvertex W=(470,280)


\drawvertex(A){$\bullet$}\drawvertex(B){$\bullet$}\drawvertex(C){$\bullet$}\drawvertex(D){$\bullet$}
\drawvertex(E){$\bullet$}\drawvertex(F){$\bullet$}\drawvertex(G){$\bullet$}\drawvertex(I){$\bullet$}\drawvertex(M){$\bullet$}\drawvertex(N){$\bullet$}
\drawvertex(H){$\bullet$}\drawvertex(L){$\bullet$}\drawvertex(O){$\bullet$}\drawvertex(T){$\bullet$}
\drawvertex(P){$\bullet$}\drawvertex(U){$\bullet$}\drawvertex(Q){$\bullet$}\drawvertex(V){$\bullet$}
\drawvertex(R){$\bullet$}\drawvertex(Z){$\bullet$}\drawvertex(Y){$\bullet$}\drawvertex(Z){$\bullet$}
\drawvertex(S){$\bullet$}\drawvertex(X){$\bullet$}\drawvertex(W){$\bullet$}

\drawundirectededge(A,B){}\drawundirectededge(A,H){}\drawundirectededge(A,I){}\drawundirectededge(B,L){}
\drawundirectededge(B,C){}\drawundirectededge(C,L){}\drawundirectededge(C,D){}\drawundirectededge(D,E){}
\drawundirectededge(D,M){}\drawundirectededge(E,M){}\drawundirectededge(E,F){}
\drawundirectededge(F,G){}\drawundirectededge(F,N){}\drawundirectededge(G,N){}\drawundirectededge(G,H){}
\drawundirectededge(H,I){}\drawundirectededge(I,O){}\drawundirectededge(O,P){}\drawundirectededge(O,W){}\drawundirectededge(P,Q){}
\drawundirectededge(Q,R){}\drawundirectededge(Q,S){}\drawundirectededge(R,L){}
\drawundirectededge(R,S){}\drawundirectededge(S,T){}\drawundirectededge(T,U){}\drawundirectededge(T,V){}
\drawundirectededge(U,M){}\drawundirectededge(U,V){}\drawundirectededge(V,Z){}\drawundirectededge(Z,Y){}\drawundirectededge(Z,X){}
\drawundirectededge(N,X){}\drawundirectededge(X,Y){}\drawundirectededge(Y,W){}\drawundirectededge(P,W){}
\end{picture}\caption{}\label{grande24}
\end{figure}
\end{center}
Note that the graph $\mathcal{G}_3\wr \mathcal{G}_1$ is obtained
in \cite{alfredo1} as the replacement product of the
$3$-dimensional Hamming cube with a triangular graph.
\end{example}
\begin{remark}\rm
Let $\mathcal{G}_1=(V_1, E_1)$ and $\mathcal{G}_2=(V_2,E_2)$ be
two finite graphs. The \textit{lexicographic product}
$\mathcal{G}_1 \circ \mathcal{G}_2$ is the graph with vertex set
$V_1\times V_2$, where two vertices $(v_1,v_2)$ and $(w_1,w_2)$
are adjacent if:
\begin{enumerate}
\item either $v_1\sim w_1$ in $\mathcal{G}_1$;
\item or $v_1=w_1$ and $v_2\sim w_2$ in $\mathcal{G}_2$.
\end{enumerate}
It follows from the definition that if $\mathcal{G}_1$ is a
$d_1$-regular graph on $n_1$ vertices and $\mathcal{G}_2$ is a
$d_2$-regular graph on $n_2$ vertices, then the graph
$\mathcal{G}_1\circ \mathcal{G}_2$ is a $(d_1n_2+d_2)$-regular
graph
on $n_1n_2$ vertices.\\
\indent Sometimes the lexicographic product of graphs, whose
automorphism group contains the wreath product of the automorphism
groups of the factors, is called wreath product. It has nothing to
do with the wreath product of Definition \ref{defierschler}.
\end{remark}

\section{Generalized wreath product of graphs}\label{section3}

Before introducing the notion of generalized wreath product of
graphs (Definition \ref{defimine}), we recall the definition of
poset block structure and generalized wreath product of
permutation groups introduced in \cite{bayleygeneralized}. We will
follow the same notation for the action to the right presented
there. See also \cite{ischia2008, orthogonal, lumpability}, where
the Gelfand pairs associated with the action of a generalized
wreath product of groups on a poset block structure are studied,
in connection with Markov chain Theory.

Let $(I,\preceq)$ be a finite poset, with $|I| = n$. For every
$i\in I$, the following subsets of $I$ can be defined:
\begin{itemize}
\item $A(i)=\{j\in I : j \succ i\}$ and $A[i] = A(i) \sqcup \{i\}$;
\item $H(i)=\{j\in I : j \prec i\}$ and $H[i] = H(i) \sqcup \{i\}$.
\end{itemize}
A subset $J\subseteq I$ is said {\it ancestral} if, whenever $i
\succ j$ and $j\in J$, then $i\in J$. Note that by definition
$A(i)$ and $A[i]$ are ancestral, for each $i\in I$. The set $A(i)$
is called the ancestral set of $i$, whereas the set $H(i)$ is
called the hereditary set of $i$.\\
\indent For each $i\in I$, let $X_i$ be a finite set, with
$|X_i|\geq 2$. For $J\subseteq I$, put $X_J = \prod_{i\in J}X_i$.
In particular, we put $X = X_I$. If $K\subseteq J \subseteq I$,
let $\pi^J_K$ denote the natural projection from $X_J$ onto $X_K$.
In particular, we set $\pi_J = \pi^I_J$ and $x_J=x\pi_J$, for
every $x\in X$. Moreover, we will use $X^i$ for
$X_{A(i)} = \prod_{j\in A(i)}X_j$ and $\pi^i$ for $\pi_{A(i)}$.\\
\indent Let $\mathcal{A}$ be the set of ancestral subsets of $I$.
If $J\in \mathcal{A}$, then the equivalence relation $\sim_J$ on
$X$ is defined as
$$
x \sim_J y \quad \Longleftrightarrow \quad x_J = y_J,\qquad
\mbox{for } x,y \in X.
$$
\begin{defi}
A {\it poset block structure} is a pair $(X,\sim_{\mathcal{A}})$,
where
\begin{enumerate}
\item $X = \prod_{(I,\preceq)}X_i$, with $(I,\preceq)$ a
finite poset and $|X_i| \geq 2$, for each $i\in I$;
\item $\sim_{\mathcal{A}}$ denotes the set of equivalence relations on
$X$ defined by all the ancestral subsets of $I$.
\end{enumerate}
\end{defi}
For each $i\in I$, let $G_i$ be a permutation group on $X_i$ and
let $F_i$ be the set of all functions from $X^i$ into $G_i$. For
$J\subseteq I$, we put $F_J = \prod_{i\in J}F_i$ and set $F =
F_I$. An element of $F$ will be denoted $f = (f_i)_{i\in I}$, with
$f_i \in F_i$.

\begin{defi}
For each $f\in F$, the action of $f$ on $X$ is defined as follows:
if $x = (x_i)_{i\in I}\in X$, then
\begin{eqnarray*}
x f = y,\quad  \mbox{where }y = (y_i)_{i\in I}\in X \quad
\mbox{and }\ y_i = x_i(x\pi^i f_i), \quad \mbox{for each }i\in I.
\end{eqnarray*}
\end{defi}
It is easy to verify that this is a faithful action of $F$ on $X$,
i.e, if $xf = xg$ for every $x\in X$, then $f=g$. Therefore
$(F,X)$ is a permutation group, called the {\it generalized wreath
product of the permutation groups $(G_i,X_i)_{i\in I}$} and
denoted $\prod_{(I,\preceq)}(G_i,X_i)$.

\begin{defi}
An automorphism of a poset block structure
$(X,\sim_{\mathcal{A}})$ is a permutation $\sigma$ of $X$ such
that, for every equivalence relation $\sim_J$ in
$\sim_{\mathcal{A}}$,
$$
x \sim_J y \qquad \Longleftrightarrow \qquad (x \sigma)\sim_J (y
\sigma), \qquad \mbox{for all } x, y \in X.
$$
\end{defi}
The following fundamental results are proven in
\cite{bayleygeneralized}. We denote by $Sym(X_i)$ the symmetric
group acting on $X_i$.
\begin{thm}
The generalized wreath product of the permutation groups $(G_i,
X_i)_{i\in I}$ is transitive on $X$ if and only if $(G_i, X_i)$ is
transitive for each $i\in I$.
\end{thm}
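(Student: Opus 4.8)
The plan is to prove both directions of the biconditional, exploiting the coordinatewise structure of the action of $F$ on $X$. Recall that for $f = (f_i)_{i\in I} \in F$ and $x = (x_i)_{i\in I}\in X$, the image $xf = y$ has coordinates $y_i = x_i(x\pi^i f_i)$, where $x\pi^i \in X^i = X_{A(i)}$ is the projection onto the ancestral coordinates and $x\pi^i f_i \in G_i$ is the permutation of $X_i$ it determines. The key observation is that the $i$-th coordinate $y_i$ depends only on $x_i$ and on the coordinates of $x$ indexed by $A(i)$; in particular, the action on coordinate $i$ is governed entirely by the group $G_i$ once the ancestral coordinates are fixed.

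First I would prove the easy direction: if $(F,X)$ is transitive, then each $(G_i,X_i)$ is transitive. Fix $i\in I$ and take any two points $u_i, v_i \in X_i$. I would choose any $x\in X$ with $x_i = u_i$ and build a target $y\in X$ with $y_i = v_i$; by transitivity of $F$ there is some $f\in F$ with $xf = y$. Reading off the $i$-th coordinate gives $v_i = u_i(x\pi^i f_i)$, so the element $g := x\pi^i f_i \in G_i$ satisfies $u_i g = v_i$. Since $u_i, v_i$ were arbitrary, $(G_i,X_i)$ is transitive.

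The converse is the substantive direction and where the main work lies. Assume each $(G_i,X_i)$ is transitive; I must construct, for arbitrary $x,y\in X$, an element $f\in F$ with $xf = y$. The natural strategy is to determine the components $f_i$ coordinate by coordinate, processing the poset $(I,\preceq)$ from the top down — that is, in an order refining $\succeq$ so that whenever I fix $f_i$, all coordinates in $A(i)$ have already been matched. Concretely, I would argue by a form of induction along the partial order: for a coordinate $i$ that is maximal among those not yet handled (or, more carefully, such that every $j\succ i$ is already processed), the current image $z := xf'$ under the partially-defined element $f'$ agrees with $y$ on $A(i)$, so $z\pi^i = y\pi^i = x\pi^i$ (after matching) and I need only choose $f_i(z\pi^i) \in G_i$ sending $z_i$ to $y_i$, which transitivity of $G_i$ permits; I set $f_i$ arbitrarily (say to the identity) on all other arguments in $X^i$.

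The main obstacle, and the point requiring care, is that $f_i$ is a \emph{function} on $X^i = X_{A(i)}$, not a single group element, and the action on coordinate $i$ samples $f_i$ at the \emph{running} ancestral projection $x\pi^i$ rather than a fixed value; I must verify that handling coordinates in a top-down (ancestral-first) order makes these projections stabilize before coordinate $i$ is touched, so that each $f_i$ can be prescribed consistently at the single relevant argument. I would make this precise by defining $f_i$ on the relevant fiber using the transitivity of $G_i$ and checking that the composite action, evaluated along the chosen order, yields exactly $y$; the faithfulness and product structure already recorded in the excerpt guarantee that the coordinatewise prescriptions assemble into a well-defined element of $F = \prod_{i\in I}F_i$. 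This completes the construction and hence the proof of transitivity of $(F,X)$.
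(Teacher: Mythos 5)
The first thing to note is that the paper itself does not prove this theorem: it is stated as one of the ``fundamental results proven in \cite{bayleygeneralized}'', so your proposal has to stand on its own rather than be compared to an in-paper argument. Your first direction does stand: from $xf=y$ one reads off $v_i = u_i(x\pi^i f_i)$ with $x\pi^i f_i\in G_i$, which is exactly transitivity of $(G_i,X_i)$.

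The converse direction, however, is organized around an obstacle that does not exist, and the construction you describe fails as written. For a \emph{single} element $f\in F$ the definition reads $(xf)_i = x_i(x\pi^i f_i)$, where $x\pi^i$ is the ancestral projection of the \emph{original} point $x$: all coordinates of $xf$ are computed simultaneously from $x$, so $f_i$ is never sampled at a ``running'' projection. (That phenomenon occurs only when \emph{composing} elements of $F$, which is precisely the content of Lemma \ref{bayleyproduct}.) Your inductive step prescribes $f_i$ at the running argument $z\pi^i$, which equals $y\pi^i$ once the ancestors are matched, and sets $f_i$ to the identity elsewhere; the parenthetical identity ``$z\pi^i = y\pi^i = x\pi^i$'' is false in general, and it is exactly this false equation that hides the problem. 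When the assembled tuple $f=(f_i)_{i\in I}$ acts on $x$, coordinate $i$ comes out as $x_i\cdot f_i(x\pi^i) = x_i$ whenever $x\pi^i\neq y\pi^i$, so the element you build does not send $x$ to $y$. A two-element chain with $X_1=X_2=\{0,1\}$, both $G_i$ cyclic of order two, $x=(0,0)$ and $y=(1,1)$ already exhibits the failure: your recipe gives $f_2(1)\neq 1_{G_2}$, $f_2(0)=1_{G_2}$, hence $(xf)_2 = 0\cdot f_2(0) = 0\neq 1$. The repair reveals that this direction is in fact immediate and needs no ordering of $I$ and no induction: choose $g_i\in G_i$ with $x_ig_i=y_i$ and let $f_i$ be the \emph{constant} function on $X^i$ with value $g_i$; then $(xf)_i = x_i g_i = y_i$ for every $i$, so $xf=y$. (Alternatively your sequential picture can be salvaged by taking $f$ to be a \emph{product} of $n$ elements of $F$, one per coordinate, processed ancestors-first, invoking closure of $F$ under composition via Lemma \ref{bayleyproduct}; but that is a genuinely different argument from ``assembling the coordinatewise prescriptions into a well-defined element of $F$'', which is what your final paragraph claims.)
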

\begin{thm}
Let $(X, \sim_{\mathcal{A}})$ be the poset block structure
associated with the poset $(I,\preceq)$. Let $F$ be the
generalized wreath product $\prod_{(I,\preceq)}Sym(X_i)$. Then $F$
is the automorphism group of $(X,\sim_{\mathcal{A}})$.
\end{thm}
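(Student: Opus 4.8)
The plan is to establish the two inclusions $F\subseteq Aut(X,\sim_{\mathcal{A}})$ and $Aut(X,\sim_{\mathcal{A}})\subseteq F$ separately, using throughout that $(F,X)$ is already known to be a faithful permutation group, so that every $f\in F$ acts as a bijection of $X$ and its inverse again lies in $F$.

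For the inclusion $F\subseteq Aut(X,\sim_{\mathcal{A}})$, the point is a locality property of the action: by definition $(xf)_i=x_i(x\pi^i f_i)$ depends only on the coordinates $x_j$ with $j\in A[i]$. If $J$ is ancestral and $j\in J$, then $A[j]\subseteq J$, since $k\succ j$ forces $k\in J$; hence for ancestral $J$ the tuple $(xf)_J$ is a function of $x_J$ alone. This gives at once the implication $x\sim_J y\Rightarrow (xf)\sim_J (yf)$. Applying the same implication to $f^{-1}\in F$ (with $xf,yf$ in place of $x,y$) yields the converse, so $f$ preserves every relation $\sim_J$ and is therefore an automorphism.

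For the reverse inclusion, let $\sigma\in Aut(X,\sim_{\mathcal{A}})$. First I would show that $(x\sigma)_i$ depends only on $x_{A[i]}$: indeed $x\sim_{A[i]} y$ implies $(x\sigma)\sim_{A[i]}(y\sigma)$, and since $i\in A[i]$ this forces $(x\sigma)_i=(y\sigma)_i$. Consequently, for each fixed $\xi\in X_{A(i)}$ there is a well-defined map $\phi^{(i)}_\xi:X_i\to X_i$ sending $x_i$ to $(x\sigma)_i$, computed from any $x$ with $x_{A(i)}=\xi$. The crucial step is the bijectivity of $\phi^{(i)}_\xi$: choosing $x,y$ that agree on $A(i)$ but differ at $i$, we have $x\sim_{A(i)} y$ and $x\not\sim_{A[i]} y$, so applying $\sigma$ gives $(x\sigma)_{A(i)}=(y\sigma)_{A(i)}$ together with $(x\sigma)_{A[i]}\neq (y\sigma)_{A[i]}$; these two facts force $(x\sigma)_i\neq (y\sigma)_i$, so $\phi^{(i)}_\xi$ is injective, hence bijective because $X_i$ is finite.

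To conclude, I define $f_i$ to be the function $X^i\to Sym(X_i)$ with $\xi f_i:=\phi^{(i)}_\xi$; this is exactly where the use of the \emph{full} symmetric groups $Sym(X_i)$ is needed, since $\phi^{(i)}_\xi$ may be an arbitrary permutation. Then $f=(f_i)_{i\in I}\in F$, and by construction $(xf)_i=x_i(x\pi^i f_i)=x_i\phi^{(i)}_{x_{A(i)}}=(x\sigma)_i$ for every $i$ and every $x$, whence $xf=x\sigma$ for all $x$; faithfulness then gives $\sigma=f\in F$. I expect the main obstacle to be the bijectivity of $\phi^{(i)}_\xi$, the one place where the relations $\sim_{A(i)}$ and $\sim_{A[i]}$ must be played off against each other; once the ancestral-set bookkeeping (notably $A[j]\subseteq J$ for ancestral $J$) is in place, the locality argument and the coordinatewise reconstruction of $\sigma$ are routine.
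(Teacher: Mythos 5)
Your proof is correct, but there is no proof in the paper to compare it with: this theorem is one of the two ``fundamental results'' that the paper quotes from \cite{bayleygeneralized} without argument, so any complete proof here is necessarily your own route. Judged on its own terms, your argument is sound and self-contained. The inclusion $F\subseteq Aut(X,\sim_{\mathcal{A}})$ rests on the locality property that $(xf)_i$ depends only on $x_{A[i]}$, together with the observation that an ancestral $J$ contains $A[j]$ for every $j\in J$; this is exactly the content of the induced maps $f_J$ satisfying $f\pi_J=\pi_J f_J$ that the paper asserts just before Lemma \ref{bayleyproduct}, and your appeal to $f^{-1}\in F$ for the reverse implication is legitimate because the paper has already established that $(F,X)$ is a permutation group (closure under products is Lemma \ref{bayleyproduct}, and closure under inverses follows since $X$ is finite). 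The reverse inclusion, which is the substantive half, is handled correctly: well-definedness of $\phi^{(i)}_{\xi}$ uses that $A[i]$ is ancestral (noted in the paper); injectivity is obtained by playing $\sim_{A(i)}$ against $\sim_{A[i]}$, using both directions of the automorphism condition; and the reconstructed $f_i:\xi\mapsto\phi^{(i)}_{\xi}$ lies in $F_i$ precisely because $G_i$ is the full symmetric group $Sym(X_i)$, which is where the hypothesis of the theorem enters. One small remark: faithfulness is not what you need at the very end. Once $x\sigma=xf$ for all $x\in X$, the permutation $\sigma$ coincides with the image of $f$ in $Sym(X)$, and that is already what $\sigma\in F$ means when $F$ is viewed as a permutation group on $X$; faithfulness would only be needed to assert that the representing tuple $(f_i)_{i\in I}$ is unique.
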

Given $f\in F$ and $J\subset I$ ancestral, a map $f_J: X_J \to
X_J$ is defined such that $f \pi_J = \pi_J f_J$. The following
lemma holds.
\begin{lemma}[\cite{bayleygeneralized}]\label{bayleyproduct}
Let $f,h\in F$. Then $fh = t$, with
$$
t_i  = f_i\cdot f_{A(i)}h_i, \qquad \mbox{for every }i\in I,
$$
where the product of $f_i$ and $f_{A(i)}h_i$ is pointwise.
\end{lemma}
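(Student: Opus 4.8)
The plan is to prove Lemma~\ref{bayleyproduct} by direct computation, tracking the action of $fh$ on an arbitrary point $x \in X$ coordinate by coordinate, and then using the faithfulness of the action to identify the $i$-th component $t_i$ of the product. First I would fix $x = (x_i)_{i\in I} \in X$ and compute $x(fh) = (xf)h$ in two stages. Setting $y = xf$, the definition of the action gives $y_i = x_i\bigl(x\pi^i f_i\bigr)$ for each $i$; then applying $h$ to $y$ yields $z = yh$ with $z_i = y_i\bigl(y\pi^i h_i\bigr)$. Substituting the first expression into the second produces
\begin{equation*}
z_i = x_i\bigl(x\pi^i f_i\bigr)\bigl(y\pi^i h_i\bigr).
\end{equation*}
On the other hand, if $t$ is defined by $t_i = f_i \cdot f_{A(i)}h_i$ (pointwise product in $G_i$), then the action of $t$ on $x$ gives $(xt)_i = x_i\bigl(x\pi^i t_i\bigr) = x_i\bigl(x\pi^i f_i\bigr)\bigl(x\pi^i (f_{A(i)}h_i)\bigr)$. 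Comparing the two, it suffices to show that $y\pi^i h_i = x\pi^i (f_{A(i)}h_i)$ for every $i$, and then faithfulness forces $t = fh$.

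The heart of the argument is therefore the identity $y\pi^i h_i = x\pi^i(f_{A(i)}h_i)$, which reduces to understanding how the projection of $y = xf$ onto the ancestral set $A(i)$ relates to $x$. Here I would invoke the auxiliary map $f_J$ associated with an ancestral set $J$, characterized by the intertwining relation $f\pi_J = \pi_J f_J$. Taking $J = A(i)$ (which is ancestral by the remark following the definition of $A(i)$), this relation says precisely that $y\pi^i = (xf)\pi_{A(i)} = x\pi_{A(i)} f_{A(i)} = (x\pi^i) f_{A(i)}$. Feeding this into $y\pi^i h_i$ gives $y\pi^i h_i = \bigl((x\pi^i)f_{A(i)}\bigr)h_i = (x\pi^i)(f_{A(i)}h_i)$, which is exactly the required identity. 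The key structural fact being used is that $h_i \in F_i$ is a function on $X^i = X_{A(i)}$, so that precomposing its argument with the map $f_{A(i)}: X_{A(i)} \to X_{A(i)}$ is the same as applying the composite $f_{A(i)}h_i$.

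The step I expect to be the main obstacle is the careful handling of the intertwining map $f_{A(i)}$ and making sure the composition $f_{A(i)}h_i$ is interpreted correctly as an element of $F_i$: one must verify that $f_{A(i)}$ is genuinely a map $X_{A(i)} \to X_{A(i)}$ so that $f_{A(i)}h_i$, read as "first apply $f_{A(i)}$ to the argument, then apply $h_i$," is well-defined and lands in $G_i$. This is really a bookkeeping matter about the direction of composition and the right-action conventions inherited from \cite{bayleygeneralized}, rather than a deep point; once the relation $f\pi_J = \pi_J f_J$ is applied with $J = A(i)$, the coordinatewise comparison closes the proof. I would conclude by noting that since the computation holds for every $x \in X$, faithfulness of the action of $F$ on $X$ yields $fh = t$ as elements of $F$, completing the argument.
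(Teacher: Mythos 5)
Your proposal is correct and follows essentially the same route as the paper's own proof: a coordinate-by-coordinate computation of $x(fh)$, the substitution $(xf)\pi^i h_i = x\pi^i\bigl(f_{A(i)}h_i\bigr)$ via the intertwining relation $f\pi_J = \pi_J f_J$ with $J = A(i)$, and the identification of the result with $x_i\bigl(x\pi^i t_i\bigr)$. The paper's version is merely terser, leaving the appeal to faithfulness and the role of $f_{A(i)}$ implicit, both of which you spell out correctly.
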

\begin{proof}
Let $x\in X$. We have:
\begin{eqnarray*}
(xfh)_i &=& (x f)_i (x f \pi^i h_i)\\
&=& x_i(x\pi^i f_i) (x \pi^i f_{A(i)}h_i)\\
&=& x_i(x\pi^i(f_i\cdot f_{A(i)}h_i))\\
&=& x_i(x\pi^it_i).
\end{eqnarray*}
\end{proof}

\begin{remark}\rm
If $(I,\preceq)$ is a finite poset, with $\preceq$ the identity
relation (Fig. \ref{figure13}), then the generalized wreath
product is the permutation direct product. In this case, we have
$A(i) = \emptyset$, for each $i\in I$, so that an element $f$ of
$F$ is given by $f = (f_i)_{i\in I}$, where the function $f_i$ is
identified with an element of $G_i$, so that its action on $x_i$
does not depend on any other coordinate of $x$.
\begin{figure}[h]
\begin{picture}(300,30)
\put(80,20){$\bullet$}\put(110,20){$\bullet$}\put(140,20){$\bullet$}\put(150,23){\circle*{1}}
\put(160,23){\circle*{1}}\put(170,23){\circle*{1}}\put(180,23){\circle*{1}}\put(190,23){\circle*{1}}
\put(200,23){\circle*{1}}\put(210,23){\circle*{1}}\put(220,20){$\bullet$}
\put(78,8){$1$}\put(110,8){$2$}\put(140,8){$3$}\put(220,8){$n$}
\end{picture}\caption{}\label{figure13}
\end{figure}
\end{remark}
\begin{remark}\rm
If $(I,\preceq)$ is a finite chain (Fig. \ref{figure14}), then the
generalized wreath product is the classical permutation wreath
product $(G_1,X_1)\wr(G_2,X_2)\wr \cdots \wr(G_n,X_n)$. In this
case, we have $A(i) = \{1,2,\ldots,i-1\}$, for each $i\in I$, so
that an element $f\in F$ is given by $f = (f_i)_{i\in I}$, with
$$
f_i:X_1 \times \cdots \times X_{i-1}\longrightarrow G_i
$$
In other words, the action of $f$ on $x_i$ depends on its \lq\lq
ancestral\rq\rq coordinates $x_1, \ldots, x_{i-1}$.
\begin{figure}[h]
\begin{picture}(250,110)
\letvertex A=(125,100)\letvertex B=(125,80)\letvertex C=(125,60)
\letvertex D=(125,55)\letvertex E=(125,50)\letvertex F=(125,45)
\letvertex G=(125,40)\letvertex H=(125,35)\letvertex I=(125,15)

\drawvertex(A){$\bullet$}\drawvertex(B){$\bullet$}\drawvertex(C){$\bullet$}\drawvertex(H){$\bullet$}
\drawvertex(I){$\bullet$}
\drawvertex(D){\circle*{1}}\drawvertex(E){\circle*{1}}\drawvertex(F){\circle*{1}}\drawvertex(G){\circle*{1}}
\drawundirectededge(A,B){}\drawundirectededge(B,C){}
\drawundirectededge(H,I){}
\put(129,97){$1$}\put(129,77){$2$}\put(129,57){$3$}\put(130,32){$n-1$}\put(129,13){$n$}
\end{picture}\caption{}\label{figure14}
\end{figure}
\end{remark}
Inspired by the definition of generalized wreath product of
permutation groups, we introduce here the notion of generalized
wreath product of graphs.\\
\indent Let $\mathcal{S},\mathcal{T}$ be two sets. Given two
functions $f,g:\mathcal{S}\longrightarrow\mathcal{T}$, we will use
the notation $f\equiv g$ to say that $f(x)=g(x)$ for every $x\in
\mathcal{S}$. Similarly, we will write $f\equiv g$ in $A\subset
\mathcal{S}$ to say that $f(x) = g(x)$ for every $x\in A$.
\begin{defi}\label{defimine}
Let $(I,\preceq )$ be a finite poset, with $|I|=n$, and let
$\mathcal{G}_i=(V_i,E_i)$ be a finite graph, for every $i\in I$.
The {\it generalized wreath product} of the graphs
$\{\mathcal{G}_i\}_{i\in I}$ is the graph $\mathcal{G}$ with
vertex set
$$
V_\mathcal{G} = \{(f_1,f_2, \ldots, f_n)\ |\ f_i : \prod_{j\in
A(i)}V_j\to V_i, \ \mbox{for each }i\in I\}
$$
and where two vertices $f=(f_1,f_2, \ldots, f_n)$ and $h=(h_1,h_2,
\ldots, h_n)$ are adjacent if there exists $i\in I$, with $A(i) =
\{i_1, \ldots, i_p\}$, such that:
\begin{enumerate}
\item $f_j\equiv h_j$, for every
$j\neq i$;
\item $f_i\equiv h_i$ in $\prod_{j\in A(i)}V_j\
\setminus\ \{(f_{i_1}, \ldots, f_{i_p})\}$, and
$f_i(f_{i_1},\ldots, f_{i_p})\sim h_i(f_{i_1},\ldots, f_{i_p})$ in
$V_i$.
\end{enumerate}
The elements $f_{i_l}$, for $l=1,\ldots, p$, are defined
recursively, starting from indices whose ancestral set in $(I,
\preceq)$ is empty; more precisely, they represent vertices of
$V_{i_l}$ obtained by evaluating the functions $f_{i_l}$ on
$(f_{j_1},\ldots, f_{j_m})$, where $A(i_l) = \{j_1,\ldots, j_m\}$,
and so on.
\end{defi}
In other words, $f=(f_1,\ldots, f_n)$ and $h=(h_1,\ldots, h_n)$
are adjacent if there exists $i\in I$ such that $f_j\equiv h_j$
for each $j\neq i$ and $f_i$ coincides with $h_i$, except when
evaluated on the $p$-tuple $(f_{i_1}, \ldots, f_{i_p})$, where
$A(i) =\{i_1, \ldots, i_p\}$. Note also that, if $A(i)=\emptyset$,
the condition $(2)$ means that it must be $f_i\sim h_i$ in
$\mathcal{G}_i$.

We have $|V_\mathcal{G}| = \prod_{i=1}^n |V_i|^{\Pi_{j\in
A(i)}|V_j|}$, where we put $\Pi_{j\in A(i)}|V_j|=1$ if
$A(i)=\emptyset$. Moreover, if $\mathcal{G}_i$ is a $d_i$-regular
graph for every $i\in I$, then $\mathcal{G}$ is a regular graph of
degree $\sum_{i\in I}d_i$.

\begin{example}\rm
Consider the case where $|I|=4$, with the poset $(I,\preceq)$ and
the graphs $\mathcal{G}_i$ represented in Fig. \ref{figure15}.
\begin{figure}[h]
\begin{picture}(300,60)
\letvertex A=(40,50)\letvertex B=(60,30)\letvertex C=(80,50)
\letvertex D=(60,0)\letvertex E=(140,30)\letvertex F=(170,30)
\letvertex G=(245,40)\letvertex H=(230,15)\letvertex I=(260,15)

\drawvertex(A){$\bullet$}\drawvertex(B){$\bullet$}\drawvertex(C){$\bullet$}\drawvertex(D){$\bullet$}
\drawvertex(E){$\bullet$}\drawvertex(F){$\bullet$}\drawvertex(G){$\bullet$}
\drawvertex(H){$\bullet$}\drawvertex(I){$\bullet$}

\put(0,30){$(I,\preceq)$}\put(115,50){$\mathcal{G}_1=\mathcal{G}_2=\mathcal{G}_3$}\put(280,30){$\mathcal{G}_4$}

\drawundirectededge(A,B){}\drawundirectededge(C,B){}\drawundirectededge(D,B){}
\drawundirectededge(E,F){}\drawundirectededge(G,H){}\drawundirectededge(H,I){}\drawundirectededge(I,G){}

\put(36,55){$1$}\put(76,55){$2$}\put(65,25){$3$}\put(65,-3){$4$}\put(137,16){$a$}\put(167,16){$b$}\put(242,45){$c$}\put(227,3){$d$}\put(257,3){$e$}
\end{picture}\caption{}\label{figure15}
\end{figure}

\noindent In this case, $A(1)=A(2)=\emptyset$, $A(3)=\{1,2\}$ and
$ A(4)=\{1,2,3\}$, so that
$$
V_\mathcal{G} = \left\{(f_1,f_2,f_3,f_4)\ |\ f_1\in V_1,\ f_2\in
V_2,\ f_3: \{a,b\}^2\to \{a,b\},\ f_4:\{a,b\}^3\to
\{c,d,e\}\right\}.
$$
The function $f_3$ will be represented as a $4$-tuple of elements
in $\{a,b\}$, whereas $f_4$ will be an $8$-tuple of elements in
$\{c,d,e\}$ (coordinates are ordered lexicographically). We have
$|V_\mathcal{G}| = 2\cdot 2\cdot 2^4\cdot 3^8$. Consider, for
instance, the vertex $f=(a,b,(b,b,a,a),(c,e,d,c,e,d,e,e))\in
V_\mathcal{G}$. Its $5$ neighbors in $\mathcal{G}$ are the
vertices:
\begin{enumerate}
\item $(b,b,(b,b,a,a),(c,e,d,c,e,d,e,e))$, since $f_1=a\sim b$ in
$\mathcal{G}_1$;
\item $(a,a,(b,b,a,a),(c,e,d,c,e,d,e,e))$, since $f_2=b\sim a$ in
$\mathcal{G}_2$;
\item $(a,b,(b,a,a,a),(c,e,d,c,e,d,e,e))$, since $f_3(f_1,f_2) = f_3(a,b) =b\sim a$ in
$\mathcal{G}_3$;
\item $(a,b,(b,b,a,a),(c,e,d,d,e,d,e,e))$ and $(a,b,(b,b,a,a),(c,e,d,e,e,d,e,e))$, since\\ $f_4(f_1,f_2, f_3(f_1,f_2)) = f_4(a,b,b) =c\sim d,e$ in
$\mathcal{G}_4$.
\end{enumerate}
\end{example}
\begin{remark}\rm
If $(I,\preceq)$ is the poset $(I,\preceq_1)$ (resp.
$(I,\preceq_2)$) in Fig. \ref{figure12}, one obtains the classical
Cartesian product of Definition \ref{definitioncartesianproduct}
(resp. the classical wreath product of Definition
\ref{defierschler}).
\begin{figure}[h]
\begin{picture}(300,45)
\letvertex A=(60,25)\letvertex B=(90,25) \letvertex C=(225,40)\letvertex D=(225,10)

\drawvertex(A){$\bullet$}\drawvertex(B){$\bullet$}\drawvertex(C){$\bullet$}\drawvertex(D){$\bullet$}

\put(-10,20){$(I,\preceq_1)$} \put(260,20){$(I,\preceq_2)$}
\put(57,11){$1$}\put(87,11){$2$}\put(230,37){$1$}\put(230,7){$2$}
\drawundirectededge(C,D){}
\end{picture}\caption{}\label{figure12}
\end{figure}
\end{remark}
\begin{remark}\rm
In \cite{generalizedcrested}, finite posets are used to define
products of finite Markov chains, called generalized crested
products, and to develop their spectral analysis. Notice that this
construction generalizes the crested products of Markov chains
introduced in \cite{crested}.
\end{remark}
We are going to prove that the generalized wreath product of
Cayley graphs of finite groups is the Cayley graph of the
generalized wreath product of the groups.

Let $(I,\preceq)$ be a finite poset, with $|I|=n$, and let $G_i$
be a finite group, for each $i\in I$. Let $S_i$ be a symmetric
generating set for $G_i$ and consider the Cayley graph
$\mathcal{G}_i=Cay(G_i,S_i)$. In order to see the correspondence,
we regard the group $G_i$ as a permutation group on itself, acting
on its elements by right multiplication (according with the
notation of \cite{bayleygeneralized}). Definition \ref{defimine}
can be reformulated as follows.
\begin{defi}\label{defirevisited}
Let $(I,\preceq )$ be a finite poset, with $|I|=n$, and let
$\mathcal{G}_i=Cay(G_i,S_i)$, where $G_i$ is a finite group and
$S_i$ is a symmetric generating set of $G_i$, for all $i\in I$. We
construct the graph $\mathcal{G}$ with vertex set
$$
V_{\mathcal{G}} = \{(f_1,f_2, \ldots, f_n)\ |\ f_i : \prod_{j\in
A(i)}G_j\to G_i, \ \mbox{for each }i\in I\}
$$
and where the vertices $(f_1,f_2, \ldots, f_n)$ and $(h_1,h_2,
\ldots, h_n)$ are adjacent if there exists $i\in I$, with $A(i) =
\{i_1, \ldots, i_p\}$, such that:
\begin{enumerate}
\item $f_j\equiv h_j$, for every
$j\neq i$;
\item $f_i\equiv h_i$ in $\prod_{j\in A(i)}G_j\
\setminus\ \{(f_{i_1}^{-1}, \ldots, f_{i_p}^{-1})\}$, and the
vertices $f_i(f_{i_1}^{-1},\ldots, f_{i_p}^{-1})$ and
$h_i(f_{i_1}^{-1},\ldots, f_{i_p}^{-1})$ are adjacent in
$\mathcal{G}_i$.
\end{enumerate}
Also in this case, the elements $f_{i_l}$, for $l=1,\ldots, p$,
are defined recursively, so that they represent elements of the
group $G_{i_l}$ obtained by evaluating the functions $f_{i_l}$ on
$(f_{j_1}^{-1},\ldots, f_{j_m}^{-1})$, where $A(i_l) =
\{j_1,\ldots, j_m\}$.
\end{defi}
The following theorem is a strong generalization of Theorem
\ref{proofwreath}.
\begin{thm}\label{theoremlast}
The generalized wreath product $\mathcal{G}$ of the graphs
$\{\mathcal{G}_i=Cay(G_i,S_i)\}_{i\in I}$ is the Cayley graph of
the generalized wreath product $G$ of the groups $\{G_i\}_{i\in
I}$, with respect to the generating set
$$
S = \{\overline{f_i}=({\bf 1}_1, \ldots, {\bf 1}_{i-1},
\overline{s_i}, {\bf 1}_{i+1}, \ldots, {\bf 1}_n),\ i\in I\},
$$
where $\overline{s_i}$ is a function taking the value $s_i\in S_i$
on $(1_{G_{1_1}}, \ldots, 1_{G_{i_p}})$, with $A(i) =
\{i_1,\ldots, i_p\}$, and the value $1_{G_i}$ elsewhere, whereas
${\bf 1}_q$ is the constant function taking the value $1_{G_q}$ on
$\prod_{u\in A(q)}G_u$, for each $q\neq i$.
\end{thm}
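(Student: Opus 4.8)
The plan is to reproduce, at a higher level of generality, the two-step strategy used for Theorem \ref{proofwreath}: first check that $S$ is a symmetric generating set of the generalized wreath product $G=\prod_{(I,\preceq)}(G_i,G_i)$, and then verify that adjacency in $\mathcal{G}$ (in the form given by Definition \ref{defirevisited}) corresponds exactly to right multiplication by an element of $S$. Since the vertex set $V_{\mathcal{G}}$ and the underlying set of $G$ are literally the same set of tuples $(f_1,\dots,f_n)$ of functions $f_i\colon\prod_{j\in A(i)}G_j\to G_i$, once both steps are completed the equality $\mathcal{G}=Cay(G,S)$ holds at the level of vertices and of edges. Symmetry of $S$ is quick: for $\overline{f_i}\in S$ built from $s_i\in S_i$, I would use Lemma \ref{bayleyproduct} to check that its inverse has the same shape with $s_i$ replaced by $s_i^{-1}$. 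Indeed all components of $\overline{f_i}$ other than the $i$-th are the constant functions, so the twist $f_{A(i)}$ attached to $\overline{f_i}$ is trivial and the product collapses to a pointwise product; since each $S_i$ is symmetric and $1_{G_i}\notin S_i$, this gives $S$ symmetric with $1_G\notin S$.

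The generating property is the first substantial point, and I would prove it by induction on $|I|$. Choosing a minimal element $m$ of $(I,\preceq)$, one has $m\notin A(j)$ for every $j\neq m$, so deleting $m$ leaves all remaining ancestral sets unchanged and produces a genuine generalized wreath product $G'$ over $I\setminus\{m\}$. Using Lemma \ref{bayleyproduct} one sees that $M=\{f\in G: f_j\equiv{\bf 1}_j\text{ for all }j\neq m\}$ is a normal subgroup isomorphic to the direct product $\prod_{w\in X_{A(m)}}G_m$ (the twist $f_{A(m)}$ being trivial on $M$), and that $G=M\rtimes G'$. The set $S$ splits accordingly into the generators supported at $m$ and a copy of a generating set $S'$ of $G'$; by induction $\langle S'\rangle=G'$, so it remains to show that the $G'$-conjugates of the $m$-generators exhaust $M$. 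A direct computation with Lemma \ref{bayleyproduct} shows that conjugation by $g\in G'$ sends the element supported at $w\in X_{A(m)}$ with value $v\in G_m$ to the element supported at $w\,g_{A(m)}^{-1}$ with the \emph{same} value $v$, exactly like the shift action in the classical wreath product. I would then invoke the transitivity criterion of \cite{bayleygeneralized} recalled above — each $(G_i,G_i)$ being transitive under right multiplication — applied to the generalized wreath product indexed by the ancestral (hence transitivity-preserving) set $A(m)$, to conclude that the orbit of the base point ${\bf 1}$ is all of $X_{A(m)}$. Together with $\langle S_m\rangle=G_m$ at each point and the fact that distinct points commute inside $M$, this yields $M\subseteq\langle S\rangle$ and hence $\langle S\rangle=G$.

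For the edge--generator correspondence I would fix a vertex $f$ and an index $i$ with $A(i)=\{i_1,\dots,i_p\}$ and compute $f\cdot\overline{f_i}$ by Lemma \ref{bayleyproduct}. For $k\neq i$ the $k$-th component of $\overline{f_i}$ is the constant function ${\bf 1}_k$, so $(f\overline{f_i})_k\equiv f_k$; for $k=i$ one obtains the pointwise product $(f\overline{f_i})_i(z)=f_i(z)\cdot\overline{s_i}(z\,f_{A(i)})$. Since $\overline{s_i}$ equals $s_i$ only at the identity tuple of $X_{A(i)}$ and $1_{G_i}$ elsewhere, $(f\overline{f_i})_i$ differs from $f_i$ precisely at the single point $z$ with $z\,f_{A(i)}={\bf 1}$, that is $z={\bf 1}\,f_{A(i)}^{-1}$. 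The crux is to identify this point with the recursively defined $(f_{i_1}^{-1},\dots,f_{i_p}^{-1})$ of Definition \ref{defirevisited}: solving $(xf)_j=1_{G_j}$ coordinatewise for $j\in A(i)$, starting from the indices with empty ancestral set and proceeding downward (legitimate because $A(j)\subseteq A(i)$ whenever $j\in A(i)$, as $A(i)$ is ancestral), forces $x_j=f_j(x_{A(j)})^{-1}$, which is exactly the recursive description in the definition. With this identification, $f\cdot\overline{f_i}=h$ holds if and only if $f$ and $h$ satisfy conditions (1)--(2) of Definition \ref{defirevisited} for the index $i$, the adjacency of $f_i(f_{i_1}^{-1},\dots,f_{i_p}^{-1})$ and $h_i(f_{i_1}^{-1},\dots,f_{i_p}^{-1})$ in $Cay(G_i,S_i)$ being realized by the generator $s_i$. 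Reading this computation in both directions establishes the correspondence and finishes the proof.

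I expect the main obstacle to be this point-identification in tandem with the generating step: keeping the recursive evaluation point of Definition \ref{defirevisited} synchronized with the group-theoretic twist $f_{A(i)}$ from Lemma \ref{bayleyproduct} is precisely where the genuine poset structure (as opposed to a chain or an antichain) intervenes, and it is where the recursion in the definition must be matched carefully against the semidirect-product bookkeeping of the induction.
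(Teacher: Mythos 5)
Your proposal is correct, and its centerpiece --- computing $f\cdot\overline{f_i}$ via Lemma \ref{bayleyproduct}, observing that the $k$-th component is unchanged for $k\neq i$ while the $i$-th component can change only at the single point $z$ with $z\,f_{A(i)}=\mathbf{1}$, and matching that point with the recursive tuple $(f_{i_1}^{-1},\ldots,f_{i_p}^{-1})$ of Definition \ref{defirevisited} --- is exactly the paper's argument. You depart from the paper in two places, in both cases doing more than it does. First, the paper dispatches the generating property in one line (``generalizing the argument developed in the proof of Theorem \ref{proofwreath}, one can check that $S$ is a generating set''), whereas you give an actual proof: induction on $|I|$ by splitting off a minimal element $m$ (so that $m\notin A(j)$ for every $j$, and the induced poset on $I\setminus\{m\}$ has unchanged ancestral sets), the decomposition $G=M\rtimes G'$ with $M\cong\prod_{w\in X_{A(m)}}G_m$ the kernel of the restriction homomorphism $G\to G'$, the computation that conjugation by $g\in G'$ shifts the support of a delta-function by the restricted action $g_{A(m)}^{\pm1}$ while preserving its value, and the transitivity theorem of \cite{bayleygeneralized} applied over the ancestral set $A(m)$ to sweep out all supports; these steps all check out, the only implicit ingredient being that restriction to an ancestral set maps $G'$ onto the generalized wreath product over $A(m)$, which holds since any such element extends by identity constant functions. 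Second, in identifying the evaluation point the paper assumes ``without loss of generality'' that the ancestral sets of the elements of $A(i)$ are nested, $A(k_1)\subset\cdots\subset A(k_s)$; strictly speaking this is not a fully general poset configuration (two elements of $A(i)$ may have incomparable ancestral sets), so it is really a notational simplification. Your recursion along a linear extension, using only the inclusion $A(j)\subseteq A(i)$ for $j\in A(i)$, needs no such assumption and covers the general case verbatim. In short: same skeleton as the paper, but your version supplies the generation argument the paper omits and removes the simplifying assumption its coordinate computation rests on.
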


\begin{proof}
Generalizing the argument developed in the proof of Theorem
\ref{proofwreath}, one can check that $S$ is a generating set of
the group $G$. Hence, we have to show that an adjacency in the
generalized wreath product of the Cayley graphs can be obtained by
multiplication by an element of $S$.\\
\indent Suppose that the vertices $f = (f_1,f_2, \ldots, f_n)$ and
$h = (h_1,h_2, \ldots, h_n)$ are adjacent, i.e., there exists
$i\in I$ satisfying the conditions of Definition
\ref{defirevisited}. Without loss of generality, we can suppose
that
$$
A(i) = \{l_1< \cdots <l_r <k_1<\cdots <k_s\}, \qquad \mbox{with
}r+s=p,
$$
with $A(l_i) = \emptyset$, $A(k_j) = \{m_{j,1}, \ldots,
m_{j,t_j}\}$ and $A(k_j)\subset A(k_{j+1})$. In particular,
observe that it must be $t_j\leq p$ and $A(k_j)\subset A(i)$. By
definition, it must be $f_j\equiv h_j$, for $j\neq i$; moreover,
we have $f_i\equiv h_i$ in $\prod_{j\in A(i)}G_j\ \setminus\
\{(f_{i_1}^{-1}, \ldots, f_{i_p}^{-1})\}$, and there exists
$s_i\in S_i$ such that $f_i(f_{i_1}^{-1},\ldots, f_{i_p}^{-1}) =
g_i$ and $h_i(f_{i_1}^{-1},\ldots, f_{i_p}^{-1}) = g_is_i$. We
have to show the following identity:
\begin{eqnarray}\label{identity}
(f_1,f_2,\ldots, f_i, \ldots, f_n)\overline{f_i} =
(h_1,h_2,\ldots, h_i, \ldots, h_n).
\end{eqnarray}
The identity \eqref{identity} is clearly true for every coordinate
$j\neq i$. We use Lemma \ref{bayleyproduct} in order to verify it
for the coordinate $i$. Let $x=(x_1,\ldots,x_n)\in \prod_{i\in
I}G_i$. Observe that, for every $c=1,\ldots,r$, we can put
$f_{l_c} = g_{l_c}$, for some $g_{l_c}\in G_{l_c}$, since
$A(l_c)=\emptyset$.\\
\indent The action of $(f_1,f_2,\ldots, f_i, \ldots,
f_n)\overline{f_i}$ on $x_i$ is given by
$$
x_i\cdot(f_i(x_{l_1}, \ldots, x_{l_r}, x_{k_1}, \ldots,
x_{k_s}))\cdot
$$
$$
\cdot((x_{l_1}g_{l_1}, \ldots,
x_{l_r}g_{l_r},x_{k_1}(f_{k_1}(x_{m_{1,1}},\ldots,x_{m_{1,t_1}})),\ldots,
x_{k_s}(f_{k_s}(x_{m_{s,1}},\ldots,x_{m_{s,t_s}})))\overline{s_i}).
$$
Now if
$$
(x_{l_1},\!\ldots\!, x_{l_r},x_{k_1},\!\ldots\!,x_{k_s})\! =\!
(g_{l_1}^{-1}\!,\! \ldots\!, g_{l_r}^{-1}\!,
(f_{k_1}(f_{m_{1,1}}^{-1}\!,\!\ldots\!,f_{m_{1,t_1}}^{-1}))^{-1}\!,\!\ldots\!,
(f_{k_s}(f_{m_{s,1}}^{-1}\!,\!\ldots\!,f_{m_{s,t_s}}^{-1}))^{-1})\!,
$$
then the argument of $\overline{s_i}$ is $(1_{G_{l_1}}, \ldots,
1_{G_{l_r}}, 1_{G_{k_1}}, \ldots, 1_{G_{k_s}})$, so that one gets
$$
x_i(f_1,f_2,\ldots, f_i, \ldots, f_n)\overline{f_i}= x_ig_is_i
$$
and so $(f \overline{f_i})_i = h_i$.\\
\indent Otherwise, the argument of $\overline{s_i}$ is different
from $(1_{G_{l_1}}, \ldots, 1_{G_{l_r}}, 1_{G_{k_1}}, \ldots,
1_{G_{k_s}})$, so that one gets $x_i(f_1,f_2,\ldots, f_i, \ldots,
f_n)\overline{f_i} = x_ig_i1_{G_i}=x_ig_i$ and so $(f
\overline{f_i})_i = f_i= h_i$.
\end{proof}

\section*{Acknowledgement}
I would like to express my deepest gratitude to Fabio Scarabotti
and Tullio Ceccherini-Silberstein for their continuous
encouragement. A part of this work was developed during my stay at
the Technische Universit\"{a}t of Graz, and I want to thank
Wolfgang Woess and Franz Lehner for several useful discussions.
This research was partially supported by the European Science
Foundation (Research Project RGLIS 4915).\\ \indent I would like
to thank the anonymous reviewers for their valuable comments and
suggestions to improve the quality of the paper.


\end{document}